\newtheorem{thm}{Theorem}
\journal{Nuclear Physics B}
\begin{document}

\begin{frontmatter}



\tnotetext[label1]{corresponding author.}
 \author{Amenah AL-Najafi\corref{cor1}\fnref{label1}}
 \ead{amenah@math.u-szeged.hu}
 \author{L\'aszl\'o Viharos\corref{cor1}\fnref{label1}}
 \ead{viharos@math.u-szeged.hu}
 

\title{Regression estimators for the tail index}

 \address[label1]{Bolyai Institute, University of Szeged, Aradi v\'ertan\'uk tere 1, Szeged, H-6720, Hungary}



\begin{abstract}
We propose a class of weighted least squares estimators for the tail index of a distribution function with a regularly varying upper tail. Our approach is based on the method developed by \cite{Holan2010} for the Parzen tail index. Asymptotic normality of the estimators is proved. Through a simulation study, these and earlier estimators are compared in the Pareto and Hall models using the mean squared error as criterion. The results show that the weighted least squares estimator is better than the other estimators investigated.

\end{abstract}

\begin{keyword}
tail index, weighted least squares estimators, Pareto model, quantile process.

\end{keyword}

\end{frontmatter}

\section{Introduction and main result}
\label{}

Let $X_1, X_2,\ldots$ be independent random variables with a common right-continuous distribution function $F$, and for each $n \in \mathbb N$, let $X_{1,n}\le \cdots \le X_{n,n}$ denote the order statistics pertaining to the sample $X_1, \ldots, X_n$. 
Let ${\cal R}_\alpha$ be the class of all distribution functions $F$ such that $1-F$ is regularly varying at infinity with index $-1/\alpha$, that is, 
\begin{equation*}
1-F(x)=x^{-1/\alpha}\ell(x),\quad 1 < x<\infty,
\end{equation*}
where $\ell$ is some positive function on the half line $[1,\infty)$, slowly varying at infinity and $\alpha > 0$ is a fixed unknown parameter to be estimated.
Introducing the quantile function $Q$ of $F$ defined as
\[Q(s):=\inf\left\lbrace x:F(x)\ge s\right\rbrace, \quad0<s\le1,\quad Q(0):=Q(0+),\]
it is well known that $F\in{\cal R}_\alpha$ if and only for some function $L$ slowly varying at zero,
\begin{equation}\label{quantile}
Q(1-s)=s^{-\alpha}L(s),\quad 0< s<1.
\end{equation}

Several estimators exist for the tail index $\alpha$ among which Hill's estimator
is the most classical. \cite{Hill1975} proposed the following estimator for the tail index $\alpha$:
\[ \widehat\alpha_n^{(H)}=\frac1{k_n}\sum_{j=1}^{k_n}\log X_{n-j+1,n}-\log X_{n-k_n,n},\]
where the $k_n$ are positive integers, which in theoretical
asymptotic considerations will satisfy the conditions
\[1\le k_n < n, \quad\ k_n\to\infty\quad\ {\rm and}\quad\ k_n/n\to0 \quad{\rm as}\ n\to\infty.\]
The asymptotic normality of $\widehat\alpha_n^{(H)}$ was first considered by \cite{Hall1982} in the following submodel of ${\cal R}_\alpha$:
\[1-F(x)=x^{-1/\alpha}C_1[1 + C_2x^{-\beta/\alpha}\{1 + o(1)\}],\quad{\rm as}\ x\to\infty,\]
for some constants $C_1 > 0$ and $C_2\not=0$. This is equivalent to
\begin{equation}\label{Hallm}
Q(1-s)=s^{-\alpha}D_1[1+D_2s^{\beta}\left\lbrace 1+o(1)\right\rbrace ],\quad s\rightarrow 0,
\end{equation}
where $D_1 = C_1^{\alpha}$ and $D_2 = C_2/C_1^{\beta}$.

Another estimators were proposed by \cite{Pickands1975}, \cite{Dekkers1989}, to name a few.

Assuming that $F$ is absolutely continuous with density function $f$, \cite{Parzen2004} studied the following alternative model for the right tail of the distribution:
\[fQ(s):=f(Q(s))=(1-s)^\nu L_1(1-s),\quad s\in(1/2,1],\]
where $\nu>0$ is a finite constant and $L_1$ is slowly varying at zero.
The parameter $\nu$ is called the Parzen tail index of the density-quantile function $fQ(\cdot)$.

Based on an orthogonal series expansion for $L_1$, \cite{Holan2010} introduced a regression estimator for the Parzen tail index using ordinary least squares. \cite{Viharos2020} obtained a more general class of estimators for $\nu$ using weighted least squares.
We adopt this method to estimate the classical tail index $\alpha$. Following the idea of \cite{Holan2010}, we assume that the slowly varying function $L$ in (\ref{quantile}) admits the truncated orthogonal series expansion 
\begin{equation*}
L(s)=\exp\left\lbrace\theta_0+2\sum_{k=1}^{p}\theta_k\cos(2\pi ks)\right\rbrace, \label{oseries}
\end{equation*}
where $p>0$ is a fixed unknown integer, and $\theta_0,\ldots,\theta_p$ are unknown parameters. 
It follows that
\begin{equation}
\log Q(1-s)=-\alpha\log s+\theta_0+2\sum_{k=1}^{p}\theta_k\cos(2\pi ks).\label{Qrepr}
\end{equation}
Let $Q_n$ be the empirical quantile function defined as
\begin{equation*}
Q_n(s)=X_{k,n} \qquad \text{if} \quad \frac{k-1}n<s\le\frac kn, \quad k=1,2,\ldots, n. 
\end{equation*}
Based on the representation (\ref{Qrepr}), we obtain the regression equations
\[\log{Q_n}(1-s_j)=-\alpha\log s_j+\theta_0+2\sum_{k=1}^{\widetilde p}\theta_k\cos(2\pi ks_j)+\varepsilon(s_j),\]
where $\varepsilon(s)=\log (Q_n(1-s)/Q(1-s))$ is the residual process, $s_j=j/n,\ j=\lceil na\rceil, \ldots, \lfloor nb\rfloor$, $a<b$ are fixed constants taken from the interval (0,1), $\widetilde p>p$ is chosen by the statistician and $\theta_k=0$ for $k>p$. We propose a class of estimators for $\alpha$ using weighted least squares. We choose some nonnegative weights of the form $w_{j,n}=R(j/n)$ with some weight function $R$. Set $y_j:=\log Q_n(1-s_j),$ 
\[y:=(y_{\lceil na\rceil},\ldots,y_{\lfloor nb\rfloor})',\]
\[W:=\textrm{diag}(w_{\lceil na\rceil,n}, \ldots, w_{\lfloor nb\rfloor,n}),\]
and let $X:=[G^*, G_0, 2G_1,\ldots, 2G_{\widetilde p}]$, where
\[G^*=\big(-\log(s_{\lceil na\rceil}), \ldots, -\log(s_{\lfloor nb\rfloor})\big)',\]
\[G_k=\big(\cos(2\pi ks_{\lceil na\rceil}), \ldots, \cos(2\pi ks_{\lfloor nb\rfloor})\big)', \quad k=0, \ldots,\widetilde p.\]
Set $\beta_{\widetilde p}:=(\alpha, \theta_0, \theta_1, \ldots ,\theta_{\widetilde p})'$.
By minimizing the weighted sum of squares
\[\sum_{\lceil na\rceil}^{\lfloor nb\rfloor}w_{j,n}\big(y_j+\alpha\log s_j-\theta_0-2\sum_{k=1}^{\widetilde p}\theta_k\cos(2\pi ks_j)\big)^2,\]
we obtain the following estimator of $\beta_{\widetilde p}$:
\[\widehat{\beta}_{\widetilde p}=(X'WX)^{-1}X'Wy.\]
Then the weighted least squares estimator of $\alpha$ can be written in the form 
\[\widehat\alpha_n^{(W)}:=e_1'\widehat\beta_{\widetilde p}=e_1'(X'WX)^{-1}X'Wy,\]
where $e_1$ is the $\widetilde p+2$ dimensional vector defined as $e_1=(1,0,0,\ldots,0)'$. 

\bigskip
We assume the following conditions on the underlying distribution:

\bigskip\noindent
$(Q_{1})$ The distribution function $F$ is continuous and twice differentiable on $(a^{*},b^{*})$, where $a^{*}=\sup\left\lbrace x:F(x)=0\right\rbrace $, $b^{*}=\inf\left\lbrace x:F(x)=1\right\rbrace, -\infty\leq a^{*}<b^{*}\leq \infty$ and $f(x):=F'(x)\neq0$ on $(a^{*},b^{*}).$\\
$(Q_{2})\ \sup_{a^{*}<x< b^{*}}F(x)(1-F(x))|f'(x)/f^{2}(x)|<\infty.$\\
$(Q_{3})\ \sup_{1-b\le s\le 1-a}1/|Q(s)|<\infty$, $\sup_{1-b\le s\le 1-a}1/fQ(s)<\infty$ and
\hfill\break $\sup_{1-b\le s\le 1-a}1/|fQ(s)Q(s)|<\infty.$\\ 

We will show that the limit matrix $M(a,b,R):=\lim_{n\to\infty}n^{-1}X'WX$ exists (see the proof of Theorem \ref{mt} in Chapter 3). Let $(v^*, v_0,\ldots, v_{\widetilde p})$ be the first row of $M(a,b,R)^{-1}$, and set $G_R(u):=R(u)\big(-v^*\log u+v_0+2\sum_{k=1}^{\widetilde p}v_k\cos(2\pi ku)\big)$ for $u\in(0,1)$.\\

Moreover, we suppose the following conditions:

\bigskip\noindent
$(R)$ The weight function $R$ is nonnegative and Riemann integrable on $[a,b].$\\
$(M)$ The matrix $M(a,b,R)$ is invertible.\\

Now we state our main result for the estimator $\widehat\alpha^{(W)}$.
Throughout, $\overset{D}{\longrightarrow}$ denotes convergence in distribution, and
limiting and order relations are always meant as $n\to\infty$ if not specified otherwise.
\begin{thm}\label{mt}
	Assume that the conditions $Q_{1}-Q_{3}$ are satisfied for the underlying distribution and suppose that the quantile function $Q$ admits the representation (\ref{Qrepr}).  Moreover, assume the conditions $(R)$ and $(M)$, and assume also that the percentiles $s_j$ are chosen from a closed set $U=[a,b]$, $0<a<b<1$, such that $s_j=j/n$, $j=\lceil na\rceil,\ldots,\lfloor nb\rfloor$, and $\widetilde p>p$. Then	
		
	\begin{equation}\label{asnormal}
	\sqrt n(\widehat\alpha_n^{(W)}-\alpha)\overset{D}{\longrightarrow}N(0,V),
	\end{equation} 
		where
	\begin{equation}\label{var}
	V=\int_a^b\int_a^b\frac{G_{R}(s)G_{R}(t)\big((1-s)\land (1-t)-(1-s)(1-t)\big)}{Q(1-s)Q(1-t)fQ(1-s)fQ(1-t)}dsdt.
	\end{equation}
\end{thm}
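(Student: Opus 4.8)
The plan is to express the estimation error $\sqrt{n}(\widehat\alpha_n^{(W)}-\alpha)$ as a linear functional of the empirical quantile process and then invoke a strong approximation (Brownian bridge) argument to pass to the limit. Since $\widehat\alpha_n^{(W)}=e_1'(X'WX)^{-1}X'Wy$ and the true parameter vector $\beta_{\widetilde p}$ satisfies the regression identity exactly up to the residual process (because $\theta_k=0$ for $k>p$ and $\widetilde p>p$), I would first subtract off the deterministic part to obtain
\begin{equation*}
\widehat\alpha_n^{(W)}-\alpha=e_1'(X'WX)^{-1}X'W\varepsilon,
\end{equation*}
where $\varepsilon=(\varepsilon(s_{\lceil na\rceil}),\ldots,\varepsilon(s_{\lfloor nb\rfloor}))'$ and $\varepsilon(s)=\log(Q_n(1-s)/Q(1-s))$. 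Here the key algebraic point is that the non-tail columns $G_0,2G_1,\ldots,2G_{\widetilde p}$ together with $G^*$ reproduce the regression mean exactly, so no bias term survives; this is where the assumption $\widetilde p>p$ is used.

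Next I would analyze the two factors separately. For the matrix factor, I would show $n^{-1}X'WX\to M(a,b,R)$ by recognizing each entry as a Riemann sum of a product of the weight function $R$ with $\log$ and cosine terms over $[a,b]$; condition $(R)$ guarantees integrability and condition $(M)$ guarantees the limit is invertible, so $(n^{-1}X'WX)^{-1}\to M(a,b,R)^{-1}$ by continuity of matrix inversion. By definition, $e_1'M(a,b,R)^{-1}=(v^*,v_0,\ldots,v_{\widetilde p})$, which is precisely what builds the kernel $G_R$. For the random factor, the heart of the proof is a first-order expansion of $\varepsilon(s)=\log Q_n(1-s)-\log Q(1-s)$. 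Using conditions $(Q_1)$–$(Q_3)$ (which are the standard Csörgő–Csörgő–Horváth–Mason regularity conditions for the quantile process), one has the Bahadur–Kiefer-type representation
\begin{equation*}
\varepsilon(s)\approx\frac{Q_n(1-s)-Q(1-s)}{Q(1-s)}\approx\frac{1}{Q(1-s)\,fQ(1-s)}\,\big(\text{uniform quantile process at }1-s\big),
\end{equation*}
valid uniformly on $s\in[a,b]$ because $Q$, $fQ$, and $fQ\cdot Q$ are bounded away from zero there by $(Q_3)$. I would make this precise by invoking a weighted approximation of the empirical (or quantile) process by a sequence of Brownian bridges $B_n$, so that $\sqrt{n}\,\varepsilon(s)$ is uniformly close to $B_n(1-s)/(Q(1-s)fQ(1-s))$.

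Combining the two factors, I would write
\begin{equation*}
\sqrt{n}(\widehat\alpha_n^{(W)}-\alpha)=\big(e_1'(n^{-1}X'WX)^{-1}\big)\,n^{-1/2}X'\sqrt{n}\,W\varepsilon,
\end{equation*}
and recognize the right-hand side as a Riemann-sum approximation to the integral functional $\int_a^b G_R(s)\,B_n(1-s)\,[Q(1-s)fQ(1-s)]^{-1}\,ds$ of the Brownian bridge. Since this is a continuous linear functional of a Gaussian process, its limit is centered normal, and its variance is computed from the covariance kernel of the Brownian bridge, $\mathrm{Cov}(B(1-s),B(1-t))=(1-s)\wedge(1-t)-(1-s)(1-t)$, which yields exactly the double integral in (\ref{var}). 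I expect the main obstacle to be the uniform control of the residual process near the endpoints and the justification that the discrete weighted sum converges to the stochastic integral: the Bahadur–Kiefer remainder must be shown to be $o_P(n^{-1/2})$ uniformly on $[a,b]$, and the passage from sum to integral requires that the integrand's oscillation be controlled, which is where the Riemann integrability in $(R)$ and the boundedness in $(Q_3)$ do the real work. The asymptotic normality then follows from the finite-dimensional convergence together with the continuity of the functional, rather than from a classical central limit theorem applied directly to the summands.
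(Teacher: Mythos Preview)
Your proposal is correct and follows essentially the same route as the paper: the algebraic identity $\widehat\alpha_n^{(W)}-\alpha=e_1'(X'WX)^{-1}X'W\underline\varepsilon$, the Riemann-sum convergence $n^{-1}X'WX\to M(a,b,R)$, the linearization $\varepsilon(s)=\gamma_n(s)+O(\gamma_n(s)^2)$ with $\gamma_n(s)=(Q_n(1-s)-Q(1-s))/Q(1-s)$, and the replacement of the quantile process by a Brownian bridge to obtain the limiting Gaussian integral. The paper invokes the Cs\"org\H{o}--R\'ev\'esz (1978) strong approximation rather than the Cs\"org\H{o}--Cs\"org\H{o}--Horv\'ath--Mason version you mention, and it organizes the argument through an explicit four-term decomposition $Y_{n,2}+A_{n,1}+A_{n,2}+A_n$ (linearization error, Brownian-bridge approximation error, and matrix-inverse replacement error handled separately), but the substance is identical to what you outline.
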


\bigskip\noindent

The proof is in Chapter 3.	

\section{Simulation results}
In order to make a comparison with existing proposals, simulations were done performed by the Matlab software. The  samples were generated from the strict Pareto model $L\equiv1$ in (\ref{quantile}) and from the Hall model (\ref{Hallm}). The Hill, Pickands, DEdH (Dekkers, Einmahl and de Haan) and the weighted least squares (WLS) estimators were included in the simulation study. We used the values $n=5000$, $a=0.001$, $b=0.4$ and $\widetilde p=1,2,3$, and the weight function $R(s)=s/500$ for the WLS estimator. In case of $R\equiv1$, we refer to as ordinary least squares (OLS) estimator. The tail indexes were chosen between $0.5$ and $20$. For the Hill, Pickands and DEdH estimators the simulations were done for sample size $n=5000$ and sample fraction size $k_n=200$. All the simulations were repeated 1000 times.

Tables \ref{table_mse1} and \ref{table_mean1} contains the empirical mean square errors (MSE) and the average simulated estimates (mean) for the strict Pareto model. We conclude that in the submodel $L\equiv1$ for all $\alpha$ values, the WLS estimator performs better than the other estimators investigated.

Tables \ref{table_mse2} and \ref{table_mean2} presents the simulation results for the Hall model.
Specifically, we used the parameters $D_1=0.4$, $D_2=1$ and $\beta=0.01$. We see from Table \ref{table_mse2} that the WLS estimator performs better than the other estimators, and the OLS estimator is competitive with the Hill estimator especially for $\widetilde p=3$. 

\begin{table}[!h]
	\caption{ Empirical mean square errors (MSE) of tail index estimates for the Pareto model and for sample size $n=5000$.}
	\label{table_mse1}
	\scalebox{0.70}{
		\begin{tabular}{cccclllccc}
			\hline
			\multicolumn{10}{c}{MSE}                                                                                                                                                                                                                                     \\ \cline{2-10} 
			& \multicolumn{3}{c|}{WLS}                            & \multicolumn{3}{c|}{OLS}                            & \multicolumn{1}{c|}{\multirow{2}{*}{Hill}} & \multicolumn{1}{c|}{\multirow{2}{*}{Pickands}} & \multirow{2}{*}{DEdh} \\ \cline{1-7}
			\multicolumn{1}{c|}{$\alpha$} & $\widetilde p=1$      & $\widetilde p=2$        & \multicolumn{1}{c|}{$\widetilde p=3$}   & $\widetilde p=1$        & $\widetilde p=2$        &  \multicolumn{1}{c|}{$\widetilde p=3$} & \multicolumn{1}{c|}{}                      & \multicolumn{1}{c|}{}                          &                       \\ \hline
			\multicolumn{1}{c|}{0.5}   & 0.00049  & 0.000668 & \multicolumn{1}{c|}{0.000945} & 0.00065  & 0.00098  & \multicolumn{1}{l|}{0.001357} & \multicolumn{1}{c|}{0.001172}              & \multicolumn{1}{c|}{0.017866}                  & 0.006558              \\
			\multicolumn{1}{c|}{0.8}   & 0.001183 & 0.001572 & \multicolumn{1}{c|}{0.002261} & 0.00161  & 0.002368 & \multicolumn{1}{l|}{0.00325}  & \multicolumn{1}{c|}{0.003325}              & \multicolumn{1}{c|}{0.02146}                   & 0.008336              \\
			\multicolumn{1}{c|}{1}     & 0.001756 & 0.002394 & \multicolumn{1}{c|}{0.003668} & 0.002425 & 0.003697 & \multicolumn{1}{l|}{0.005203} & \multicolumn{1}{c|}{0.005457}              & \multicolumn{1}{c|}{0.024083}                  & 0.010687              \\
			\multicolumn{1}{c|}{1.2}   & 0.002821 & 0.003826 & \multicolumn{1}{c|}{0.005298} & 0.003641 & 0.005365 & \multicolumn{1}{l|}{0.007366} & \multicolumn{1}{c|}{0.007532}              & \multicolumn{1}{c|}{0.025102}                  & 0.01219               \\
			\multicolumn{1}{c|}{1.5}   & 0.00451  & 0.006126 & \multicolumn{1}{c|}{0.008397} & 0.005867 & 0.008671 & \multicolumn{1}{l|}{0.01188}  & \multicolumn{1}{c|}{0.01052}               & \multicolumn{1}{c|}{0.03013}                   & 0.016092              \\
			\multicolumn{1}{c|}{1.8}   & 0.006049 & 0.007993 & \multicolumn{1}{c|}{0.011399} & 0.007694 & 0.011178 & \multicolumn{1}{l|}{0.015334} & \multicolumn{1}{c|}{0.016801}              & \multicolumn{1}{c|}{0.035497}                  & 0.021695              \\
			\multicolumn{1}{c|}{2}     & 0.007639 & 0.010499 & \multicolumn{1}{c|}{0.014921} & 0.010842 & 0.016055 & \multicolumn{1}{l|}{0.022093} & \multicolumn{1}{c|}{0.020194}              & \multicolumn{1}{c|}{0.034981}                  & 0.025421              \\
			\multicolumn{1}{c|}{3}     & 0.017668 & 0.024202 & \multicolumn{1}{c|}{0.034858} & 0.023523 & 0.034985 & \multicolumn{1}{l|}{0.047931} & \multicolumn{1}{c|}{0.044665}              & \multicolumn{1}{c|}{0.063986}                  & 0.049712              \\
			\multicolumn{1}{c|}{4}     & 0.029136 & 0.040729 & \multicolumn{1}{c|}{0.05895}  & 0.03926  & 0.058641 & \multicolumn{1}{l|}{0.080589} & \multicolumn{1}{c|}{0.0807}                & \multicolumn{1}{c|}{0.094346}                  & 0.089062              \\
			\multicolumn{1}{c|}{5}     & 0.047688 & 0.063472 & \multicolumn{1}{c|}{0.096547} & 0.064079 & 0.094958 & \multicolumn{1}{l|}{0.13097}  & \multicolumn{1}{c|}{0.114725}              & \multicolumn{1}{c|}{0.13557}                   & 0.121162              \\
			\multicolumn{1}{c|}{5.5}   & 0.055014 & 0.076889 & \multicolumn{1}{c|}{0.106532} & 0.074036 & 0.110494 & \multicolumn{1}{l|}{0.151476} & \multicolumn{1}{c|}{0.142506}              & \multicolumn{1}{c|}{0.16283}                   & 0.144236              \\
			\multicolumn{1}{c|}{6}     & 0.071694 & 0.103854 & \multicolumn{1}{c|}{0.141469} & 0.089924 & 0.129628 & \multicolumn{1}{l|}{0.171023} & \multicolumn{1}{c|}{0.173129}              & \multicolumn{1}{c|}{0.188113}                  & 0.175776              \\
			\multicolumn{1}{c|}{10}    & 0.191172 & 0.262768 & \multicolumn{1}{c|}{0.375258} & 0.233466 & 0.339353 & \multicolumn{1}{l|}{0.45505}  & \multicolumn{1}{c|}{0.525182}              & \multicolumn{1}{c|}{0.558138}                  & 0.527627              \\
			\multicolumn{1}{c|}{15}    & 0.402501 & 0.535825 & \multicolumn{1}{c|}{0.802723} & 0.582015 & 0.884501 & \multicolumn{1}{l|}{1.226799} & \multicolumn{1}{c|}{1.169978}              & \multicolumn{1}{c|}{1.167519}                  & 1.176961              \\
			\multicolumn{1}{c|}{20}    & 0.792631 & 1.095608 & \multicolumn{1}{c|}{1.579634} & 0.996911 & 1.434474 & \multicolumn{1}{l|}{1.916717} & \multicolumn{1}{c|}{2.100758}              & \multicolumn{1}{c|}{1.981171}                  & 2.101663              \\ \hline\\
	\end{tabular}}

\end{table}

\vfill\eject
\begin{table}[!h]
	\caption{Average simulated tail index estimates (Mean) for sample size $n=5000$ and for the Pareto model.}
	\label{table_mean1}
	\scalebox{0.72}{
		\begin{tabular}{cccclllccc}	
			\hline
			\multicolumn{10}{c}{Mean}                                                                                                                                                                                                                                 \\ \cline{2-10} 
			& \multicolumn{3}{c|}{WLS}                            & \multicolumn{3}{c|}{OLS}                            & \multicolumn{1}{c|}{\multirow{2}{*}{Hill}} & \multicolumn{1}{c|}{\multirow{2}{*}{Pickands}} & \multirow{2}{*}{DEdh} \\ \cline{1-7}
			\multicolumn{1}{c|}{$\alpha$}& $\widetilde p=1$      & $\widetilde p=2$        & \multicolumn{1}{c|}{$\widetilde p=3$}   & $\widetilde p=1$        & $\widetilde p=2$        &  \multicolumn{1}{c|}{$\widetilde p=3$}        & \multicolumn{1}{c|}{}                      & \multicolumn{1}{c|}{}                          &                       \\ \hline
			\multicolumn{1}{c|}{0.5}   & 0.500964 & 0.501233 & \multicolumn{1}{c|}{0.502571} & 0.503044 & 0.504023 & \multicolumn{1}{l|}{0.505077} & \multicolumn{1}{c|}{0.501476}              & \multicolumn{1}{c|}{0.495427}                  & 0.489674              \\
			\multicolumn{1}{c|}{0.8}   & 0.801937 & 0.802524 & \multicolumn{1}{c|}{0.803656} & 0.805577 & 0.807293 & \multicolumn{1}{l|}{0.809021} & \multicolumn{1}{c|}{0.800238}              & \multicolumn{1}{c|}{0.801774}                  & 0.783686              \\
			\multicolumn{1}{c|}{1}     & 1.001483 & 1.001634 & \multicolumn{1}{c|}{1.00246}  & 1.005316 & 1.00711  & \multicolumn{1}{l|}{1.009101} & \multicolumn{1}{c|}{1.001825}              & \multicolumn{1}{c|}{1.004785}                  & 0.98694               \\
			\multicolumn{1}{c|}{1.2}   & 1.201603 & 1.201804 & \multicolumn{1}{c|}{1.202563} & 1.206612 & 1.208947 & \multicolumn{1}{l|}{1.211492} & \multicolumn{1}{c|}{1.197918}              & \multicolumn{1}{c|}{1.195252}                  & 1.185589              \\
			\multicolumn{1}{c|}{1.5}   & 1.502324 & 1.502346 & \multicolumn{1}{c|}{1.502635} & 1.509168 & 1.512328 & \multicolumn{1}{l|}{1.515847} & \multicolumn{1}{c|}{1.501775}              & \multicolumn{1}{c|}{1.492907}                  & 1.485452              \\
			\multicolumn{1}{c|}{1.8}   & 1.805614 & 1.807831 & \multicolumn{1}{c|}{1.808328} & 1.812501 & 1.815819 & \multicolumn{1}{l|}{1.818663} & \multicolumn{1}{c|}{1.801355}              & \multicolumn{1}{c|}{1.80158}                   & 1.787262              \\
			\multicolumn{1}{c|}{2}     & 2.006075 & 2.008649 & \multicolumn{1}{c|}{2.012745} & 2.016946 & 2.022076 & \multicolumn{1}{l|}{2.026978} & \multicolumn{1}{c|}{2.004505}              & \multicolumn{1}{c|}{2.004395}                  & 1.988554              \\
			\multicolumn{1}{c|}{3}     & 3.004755 & 3.002857 & \multicolumn{1}{c|}{3.007692} & 3.013462 & 3.017458 & \multicolumn{1}{l|}{3.022898} & \multicolumn{1}{c|}{3.007171}              & \multicolumn{1}{c|}{3.002503}                  & 2.996076              \\
			\multicolumn{1}{c|}{4}     & 4.00635  & 4.009942 & \multicolumn{1}{c|}{4.017468} & 4.028563 & 4.039037 & \multicolumn{1}{l|}{4.049668} & \multicolumn{1}{c|}{3.985504}              & \multicolumn{1}{c|}{3.98685}                   & 3.966318              \\
			\multicolumn{1}{c|}{5}     & 5.007934 & 5.007172 & \multicolumn{1}{c|}{5.011766} & 5.020999 & 5.027234 & \multicolumn{1}{l|}{5.034629} & \multicolumn{1}{c|}{5.004943}              & \multicolumn{1}{c|}{5.012502}                  & 4.98503               \\
			\multicolumn{1}{c|}{5.5}   & 5.521636 & 5.523414 & \multicolumn{1}{c|}{5.535038} & 5.54912  & 5.562017 & \multicolumn{1}{l|}{5.576119} & \multicolumn{1}{c|}{5.498843}              & \multicolumn{1}{c|}{5.49632}                   & 5.48765               \\
			\multicolumn{1}{c|}{6}     & 6.010705 & 6.020936 & \multicolumn{1}{c|}{6.035309} & 6.042542 & 6.057651 & \multicolumn{1}{l|}{6.071267} & \multicolumn{1}{c|}{6.00263}               & \multicolumn{1}{c|}{6.012857}                  & 5.987134              \\
			\multicolumn{1}{c|}{10}    & 10.03551 & 10.0453  & \multicolumn{1}{c|}{10.04212} & 10.06879 & 10.0851  & \multicolumn{1}{l|}{10.099}   & \multicolumn{1}{c|}{9.997173}              & \multicolumn{1}{c|}{10.04161}                  & 9.981231              \\
			\multicolumn{1}{c|}{15}    & 15.00041 & 15.02029 & \multicolumn{1}{c|}{15.05347} & 15.07633 & 15.11221 & \multicolumn{1}{l|}{15.14596} & \multicolumn{1}{c|}{15.05984}              & \multicolumn{1}{c|}{15.02914}                  & 15.0449               \\
			\multicolumn{1}{c|}{20}    & 20.0481  & 20.05749 & \multicolumn{1}{c|}{20.09294} & 20.11033 & 20.14008 & \multicolumn{1}{l|}{20.17114} & \multicolumn{1}{c|}{20.01204}              & \multicolumn{1}{c|}{20.04928}                  & 19.99807              \\ \hline
	\end{tabular}}
\end{table}

\begin{table}[!h]
	\caption{ Empirical mean square errors (MSE) of tail index estimates for the Hall model and for sample size $n=5000$.}
	\label{table_mse2}
	\scalebox{0.69}{
		\begin{tabular}{llllllllll}
			
			\hline
			\multicolumn{10}{c}{MSE}                                                                                                                                                                                                                                                                                                  \\ \cline{2-10} 
			& \multicolumn{3}{c|}{WLS}                                                                      & \multicolumn{3}{c|}{OLS}                            & \multicolumn{1}{c|}{\multirow{2}{*}{Hill}} & \multicolumn{1}{c|}{\multirow{2}{*}{Pickands}} & \multicolumn{1}{c}{\multirow{2}{*}{DEdh}} \\ \cline{1-7}
			\multicolumn{1}{l|}{$\alpha$} & \multicolumn{1}{c}{$\tilde{p}=1$}      & \multicolumn{1}{c}{$\tilde{p}=2$}        & \multicolumn{1}{c|}{$\tilde{p}=3$}          & $\tilde{p}=1$        & $\tilde{p}=2$        & \multicolumn{1}{l|}{$\tilde{p}=3$}        & \multicolumn{1}{c|}{}                      & \multicolumn{1}{c|}{}                          & \multicolumn{1}{c}{}                      \\ \hline
			\multicolumn{1}{l|}{0.5}  & \multicolumn{1}{c}{0.000495} & \multicolumn{1}{c}{0.000667} & \multicolumn{1}{c|}{0.00092558} & 0.000632 & 0.000946 & \multicolumn{1}{l|}{0.001306} & \multicolumn{1}{c|}{0.001159}              & \multicolumn{1}{l|}{0.017902}                  & \multicolumn{1}{c}{0.00665892}            \\
			\multicolumn{1}{l|}{0.8}  & \multicolumn{1}{c}{0.001174} & \multicolumn{1}{c}{0.001552} & \multicolumn{1}{c|}{0.00222172} & 0.00156  & 0.002292 & \multicolumn{1}{l|}{0.003147} & \multicolumn{1}{c|}{0.003306}              & \multicolumn{1}{l|}{0.02142}                   & \multicolumn{1}{c}{0.00847904}            \\
			\multicolumn{1}{l|}{1}    & \multicolumn{1}{c}{0.001749} & \multicolumn{1}{c}{0.002379} & \multicolumn{1}{c|}{0.00363231} & 0.002374 & 0.003616 & \multicolumn{1}{l|}{0.005088} & \multicolumn{1}{c|}{0.00541}               & \multicolumn{1}{l|}{0.024003}                  & \multicolumn{1}{c}{0.01078627}            \\
			\multicolumn{1}{l|}{1.2}  & \multicolumn{1}{c}{0.002806} & \multicolumn{1}{c}{0.003801} & \multicolumn{1}{c|}{0.00525345} & 0.003571 & 0.005259 & \multicolumn{1}{l|}{0.007218} & \multicolumn{1}{c|}{0.007516}              & \multicolumn{1}{l|}{0.025114}                  & \multicolumn{1}{c}{0.01229618}            \\
			\multicolumn{1}{l|}{1.5}  & \multicolumn{1}{c}{0.004482} & \multicolumn{1}{c}{0.006087} & \multicolumn{1}{c|}{0.00834029} & 0.005763 & 0.008519 & \multicolumn{1}{l|}{0.011673} & \multicolumn{1}{c|}{0.010459}              & \multicolumn{1}{l|}{0.030153}                  & \multicolumn{1}{c}{0.01618835}            \\
			\multicolumn{1}{l|}{1.8}  & \multicolumn{1}{c}{0.005985} & \multicolumn{1}{c}{0.007897} & \multicolumn{1}{c|}{0.01127938} & 0.007554 & 0.010987 & \multicolumn{1}{l|}{0.015093} & \multicolumn{1}{c|}{0.016721}              & \multicolumn{1}{l|}{0.035417}                  & \multicolumn{1}{c}{0.02175322}            \\
			\multicolumn{1}{l|}{2}    & 0.007566                     & 0.010387                     & \multicolumn{1}{l|}{0.01474723} & 0.010648 & 0.015785 & \multicolumn{1}{l|}{0.021747} & \multicolumn{1}{l|}{0.020076}              & \multicolumn{1}{l|}{0.034877}                  & 0.02545883                                \\
			\multicolumn{1}{l|}{3}    & 0.017587                     & 0.024119                     & \multicolumn{1}{l|}{0.03469301} & 0.023338 & 0.034725 & \multicolumn{1}{l|}{0.047576} & \multicolumn{1}{l|}{0.044474}              & \multicolumn{1}{l|}{0.063841}                  & 0.04963012                                \\
			\multicolumn{1}{l|}{4}    & 0.029026                     & 0.040556                     & \multicolumn{1}{l|}{0.0586581}  & 0.038909 & 0.058141 & \multicolumn{1}{l|}{0.079932} & \multicolumn{1}{l|}{0.08067}               & \multicolumn{1}{l|}{0.094312}                  & 0.08921482                                \\
			\multicolumn{1}{l|}{5}    & 0.04754                      & 0.063301                     & \multicolumn{1}{l|}{0.09626703} & 0.063773 & 0.094531 & \multicolumn{1}{l|}{0.130401} & \multicolumn{1}{l|}{0.114477}              & \multicolumn{1}{l|}{0.135233}                  & 0.12110866                                \\
			\multicolumn{1}{l|}{5.5}  & 0.054727                     & 0.076546                     & \multicolumn{1}{l|}{0.10602299} & 0.073448 & 0.109716 & \multicolumn{1}{l|}{0.150488} & \multicolumn{1}{l|}{0.142289}              & \multicolumn{1}{l|}{0.162625}                  & 0.14413155                                \\
			\multicolumn{1}{l|}{6}    & 0.071496                     & 0.103502                     & \multicolumn{1}{l|}{0.14091586} & 0.089385 & 0.128878 & \multicolumn{1}{l|}{0.170073} & \multicolumn{1}{l|}{0.172846}              & \multicolumn{1}{l|}{0.187722}                  & 0.17564752                                \\
			\multicolumn{1}{l|}{10}   & 0.190659                     & 0.262089                     & \multicolumn{1}{l|}{0.37450066} & 0.232588 & 0.338214 & \multicolumn{1}{l|}{0.453664} & \multicolumn{1}{l|}{0.524723}              & \multicolumn{1}{l|}{0.557207}                  & 0.52732507                                \\
			\multicolumn{1}{l|}{15}   & 0.402258                     & 0.5353                       & \multicolumn{1}{l|}{0.80169824} & 0.580913 & 0.882852 & \multicolumn{1}{l|}{1.2246}   & \multicolumn{1}{l|}{1.168656}              & \multicolumn{1}{l|}{1.166491}                  & 1.17578666                                \\
			\multicolumn{1}{l|}{20}   & 0.791792                     & 1.094529                     & \multicolumn{1}{l|}{1.57797168} & 0.995368 & 1.432428 & \multicolumn{1}{l|}{1.914136} & \multicolumn{1}{l|}{2.099641}              & \multicolumn{1}{l|}{1.979735}                  & 2.10068457                                \\ \hline
	\end{tabular}}
\end{table}

\begin{table}[!h]
	\caption{Average simulated tail index estimates (Mean) for sample size $n=5000$ and for the Hall model.}
	\label{table_mean2}
	\scalebox{0.75}{
		\begin{tabular}{llllllllll}
			\hline
			\multicolumn{10}{c}{Mean}                                                                                                                                                                                                                                                                                               \\ \cline{2-10} 
			& \multicolumn{3}{c|}{WLS}                                                                    & \multicolumn{3}{c|}{OLS}                            & \multicolumn{1}{c|}{\multirow{2}{*}{Hill}} & \multicolumn{1}{c|}{\multirow{2}{*}{Pickands}} & \multicolumn{1}{c}{\multirow{2}{*}{DEdh}} \\ \cline{1-7}
			\multicolumn{1}{l|}{$\alpha$} & \multicolumn{1}{c}{$\tilde{p}=1$}      & \multicolumn{1}{c}{$\tilde{p}=2$}        & \multicolumn{1}{c|}{$\tilde{p}=3$}        & $\tilde{p}=1$        & $\tilde{p}=2$        & \multicolumn{1}{l|}{$\tilde{p}=3$}        & \multicolumn{1}{c|}{}                      & \multicolumn{1}{c|}{}                          & \multicolumn{1}{c}{}                      \\ \hline
			\multicolumn{1}{l|}{0.5}  & \multicolumn{1}{c}{0.49603}  & \multicolumn{1}{c}{0.496302} & \multicolumn{1}{c|}{0.497636} & 0.498107 & 0.499084 & \multicolumn{1}{l|}{0.500135} & \multicolumn{1}{c|}{0.496567}              & \multicolumn{1}{l|}{0.490542}                  & \multicolumn{1}{c}{0.484814}              \\
			\multicolumn{1}{l|}{0.8}  & \multicolumn{1}{c}{0.797}    & \multicolumn{1}{c}{0.79759}  & \multicolumn{1}{c|}{0.798724} & 0.800636 & 0.802349 & \multicolumn{1}{l|}{0.804074} & \multicolumn{1}{c|}{0.795342}              & \multicolumn{1}{l|}{0.796859}                  & \multicolumn{1}{c}{0.77882}               \\
			\multicolumn{1}{l|}{1}    & \multicolumn{1}{c}{0.996551} & \multicolumn{1}{c}{0.996707} & \multicolumn{1}{c|}{0.997539} & 1.000382 & 1.002176 & \multicolumn{1}{l|}{1.004164} & \multicolumn{1}{c|}{0.996921}              & \multicolumn{1}{l|}{0.999856}                  & \multicolumn{1}{c}{0.982061}              \\
			\multicolumn{1}{l|}{1.2}  & \multicolumn{1}{c}{1.196672} & \multicolumn{1}{c}{1.196878} & \multicolumn{1}{c|}{1.197643} & 1.201678 & 1.204011 & \multicolumn{1}{l|}{1.206553} & \multicolumn{1}{c|}{1.193032}              & \multicolumn{1}{l|}{1.190336}                  & \multicolumn{1}{c}{1.180723}              \\
			\multicolumn{1}{l|}{1.5}  & \multicolumn{1}{c}{1.497391} & \multicolumn{1}{c}{1.49742}  & \multicolumn{1}{c|}{1.497717} & 1.50423  & 1.507388 & \multicolumn{1}{l|}{1.510903} & \multicolumn{1}{c|}{1.496874}              & \multicolumn{1}{l|}{1.487989}                  & \multicolumn{1}{c}{1.480568}              \\
			\multicolumn{1}{l|}{1.8}  & \multicolumn{1}{c}{1.800674} & \multicolumn{1}{c}{1.802891} & \multicolumn{1}{c|}{1.803397} & 1.807559 & 1.810876 & \multicolumn{1}{l|}{1.81372}  & \multicolumn{1}{c|}{1.796457}              & \multicolumn{1}{l|}{1.796655}                  & \multicolumn{1}{c}{1.782377}              \\
			\multicolumn{1}{l|}{2}    & 2.001136                     & 2.003709                     & \multicolumn{1}{l|}{2.007804} & 2.011997 & 2.017123 & \multicolumn{1}{l|}{2.02202}  & \multicolumn{1}{l|}{1.999599}              & \multicolumn{1}{l|}{1.999456}                  & 1.98366                                   \\
			\multicolumn{1}{l|}{3}    & 2.999823                     & 2.997934                     & \multicolumn{1}{l|}{3.00277}  & 3.008533 & 3.01253  & \multicolumn{1}{l|}{3.017969} & \multicolumn{1}{l|}{3.002265}              & \multicolumn{1}{l|}{2.99757}                   & 2.991178                                  \\
			\multicolumn{1}{l|}{4}    & 4.001418                     & 4.005012                     & \multicolumn{1}{l|}{4.012537} & 4.023621 & 4.03409  & \multicolumn{1}{l|}{4.044716} & \multicolumn{1}{l|}{3.980627}              & \multicolumn{1}{l|}{3.981932}                  & 3.961447                                  \\
			\multicolumn{1}{l|}{5}    & 5.003001                     & 5.002247                     & \multicolumn{1}{l|}{5.006845} & 5.016071 & 5.022308 & \multicolumn{1}{l|}{5.029703} & \multicolumn{1}{l|}{5.000043}              & \multicolumn{1}{l|}{5.007562}                  & 4.980135                                  \\
			\multicolumn{1}{l|}{5.5}  & 5.516692                     & 5.518475                     & \multicolumn{1}{l|}{5.530098} & 5.544169 & 5.557062 & \multicolumn{1}{l|}{5.57116}  & \multicolumn{1}{l|}{5.493949}              & \multicolumn{1}{l|}{5.491392}                  & 5.482761                                  \\
			\multicolumn{1}{l|}{6}    & 6.005772                     & 6.016001                     & \multicolumn{1}{l|}{6.03037}  & 6.037599 & 6.052704 & \multicolumn{1}{l|}{6.066316} & \multicolumn{1}{l|}{5.997733}              & \multicolumn{1}{l|}{6.007918}                  & 5.982241                                  \\
			\multicolumn{1}{l|}{10}   & 10.03057                     & 10.04036                     & \multicolumn{1}{l|}{10.03719} & 10.06385 & 10.08015 & \multicolumn{1}{l|}{10.09406} & \multicolumn{1}{l|}{9.99228}               & \multicolumn{1}{l|}{10.03666}                  & 9.97634                                   \\
			\multicolumn{1}{l|}{15}   & 14.99548                     & 15.01536                     & \multicolumn{1}{l|}{15.04854} & 15.07139 & 15.10728 & \multicolumn{1}{l|}{15.14102} & \multicolumn{1}{l|}{15.05493}              & \multicolumn{1}{l|}{15.0242}                   & 15.03999                                  \\
			\multicolumn{1}{l|}{20}   & 20.04316                     & 20.05255                     & \multicolumn{1}{l|}{20.08801} & 20.1054  & 20.13515 & \multicolumn{1}{l|}{20.16621} & \multicolumn{1}{l|}{20.00714}              & \multicolumn{1}{l|}{20.04434}                  & 19.99317                                  \\ \hline
	\end{tabular}}
\end{table}

\newpage

\section{Proof of Theorem \ref{mt}}

Let $q_n(s)$ be the quantile process defined as
\begin{equation*}
q_n(s)=\sqrt n(Q_n(s)-Q(s)),\quad 0<s<1.
\end{equation*}

The proof is based on the strong approximation of the quantile process. 
\begin{thm}\label{qproct}
	(\cite{Csorgo1978}, Theorem 6.)
    Suppose that the conditions $Q_{1}$ and $Q_{2}$ are satisfied. Then on some probability space one can define a sequence $\{B_n(t):0\le t\le1\}_{n=1}^\infty$ of Brownian bridges such that
	\begin{equation*}
	\sup_{\delta_n\le s\le1-\delta_n}\big|fQ(s)q_n(s)-B_n(s)\big|\overset{a.s.}=O(n^{-1/2}\log n),
	\end{equation*}
	where $\delta_n=25n^{-1}\log\log n$.
\end{thm}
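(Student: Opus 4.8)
The plan is to reduce the general quantile process to the uniform one by the probability integral transform, to approximate the uniform quantile process by Brownian bridges through a Komlós--Major--Tusnády / Csörgő--Révész type result, and finally to transfer this approximation across the density--quantile factor $fQ$, using condition $(Q_2)$ to control the smoothing error introduced by that transfer.

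First I would realize the sample as $X_i=Q(U_i)$ with $U_1,U_2,\ldots$ independent uniform on $(0,1)$; since $Q$ is nondecreasing this is distributionally exact and gives $Q_n(s)=Q(U_n(s))$, where $U_n$ is the empirical quantile function of the $U_i$ and $U_{k,n}=U_n(s)$ for $(k-1)/n<s\le k/n$. Writing $u_n(s):=\sqrt n(U_n(s)-s)$ for the uniform quantile process and using $Q'=1/(fQ)$ (valid under $(Q_1)$), a mean value expansion yields
\[q_n(s)=\sqrt n\big(Q(U_n(s))-Q(s)\big)=Q'(\xi_n(s))\,u_n(s)=\frac{u_n(s)}{fQ(\xi_n(s))},\]
where $\xi_n(s)$ lies between $s$ and $U_n(s)$. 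Multiplying by $fQ(s)$ and subtracting a Brownian bridge $B_n(s)$ gives the decomposition
\[fQ(s)q_n(s)-B_n(s)=\Big(\frac{fQ(s)}{fQ(\xi_n(s))}-1\Big)u_n(s)+\big(u_n(s)-B_n(s)\big).\]
The bridges $B_n$ are the ones furnished by the strong approximation of the \emph{uniform} quantile process: on a suitable probability space one has $\sup_{\delta_n\le s\le1-\delta_n}|u_n(s)-B_n(s)|\overset{a.s.}{=}O(n^{-1/2}\log n)$, so the second term already meets the asserted rate, and it remains to bound the first term by the same order uniformly on $[\delta_n,1-\delta_n]$.

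For the first term condition $(Q_2)$ enters decisively. Putting $x=Q(t)$ one computes $\frac{d}{dt}\log fQ(t)=f'(Q(t))/f(Q(t))^2$, so $(Q_2)$ gives $|(\log fQ)'(t)|\le \gamma/(t(1-t))$ with $\gamma:=\sup_x F(x)(1-F(x))|f'(x)/f^2(x)|<\infty$. Integrating from $\xi_n(s)$ to $s$ and using $\int dt/(t(1-t))=\log\{t/(1-t)\}$ bounds $|\log(fQ(s)/fQ(\xi_n(s)))|$, hence $|fQ(s)/fQ(\xi_n(s))-1|$, by a multiple of $|\log(s/\xi_n(s))|\lesssim|s-\xi_n(s)|/\xi_n(s)$ once $\xi_n(s)\asymp s$. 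Since $|s-\xi_n(s)|\le|U_n(s)-s|=|u_n(s)|/\sqrt n$, the first term is controlled by $\gamma\,|u_n(s)|^2/(s\sqrt n)$ up to constants, and a weighted law-of-the-iterated-logarithm (Chung--Smirnov) bound $\sup_{\delta_n\le s\le1-\delta_n}|u_n(s)|^2/(s(1-s))=O(\log\log n)$ a.s.\ makes the factor $s$ cancel, leaving $O(n^{-1/2}\log\log n)$ uniformly --- smaller than the uniform approximation error.

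The main obstacle is the endpoint analysis near $s=\delta_n$ and $s=1-\delta_n$, which is exactly what pins down the trimming level $\delta_n=25n^{-1}\log\log n$ and the constant $25$. The interior estimate above presupposes $\xi_n(s)\asymp s$ uniformly, i.e.\ two-sided control $c_1\le U_n(s)/s\le c_2$ of the uniform quantile function down to $s=\delta_n$ (and symmetrically near $1$); establishing this almost surely for all large $n$ rests on exponential tail inequalities for the uniform order statistics together with Borel--Cantelli, and the sharp scale $n^{-1}\log\log n$ --- with the explicit constant $25$ calibrating those inequalities --- is precisely the threshold below which the ratio $U_n(s)/s$ ceases to be bounded (it is governed by the law of the iterated logarithm for the minimum $U_{1,n}$). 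This boundary bookkeeping, rather than the interior mean value argument, is the delicate part; once it is in place the three bounds combine to give $\sup_{\delta_n\le s\le1-\delta_n}|fQ(s)q_n(s)-B_n(s)|\overset{a.s.}{=}O(n^{-1/2}\log n)$.
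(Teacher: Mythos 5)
This statement is not proved in the paper at all: it is quoted verbatim as Theorem 6 of Cs\"org\H o and R\'ev\'esz (1978) and used as a black-box ingredient in the proof of Theorem \ref{mt}, so there is no internal proof to compare yours against. Your outline essentially reconstructs the original argument of that reference: the quantile transform $X_i=Q(U_i)$, the mean-value decomposition of $fQ(s)q_n(s)-B_n(s)$ into an oscillation term $\big(fQ(s)/fQ(\xi_n(s))-1\big)u_n(s)$ plus the uniform approximation error $u_n(s)-B_n(s)$, the bound $|(\log fQ)'(t)|\le\gamma/(t(1-t))$ extracted from condition $(Q_2)$, and the trimming calibration near $\delta_n$. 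As a road map it is correct, but be aware that its three pillars --- the $O(n^{-1/2}\log n)$ strong approximation of the \emph{uniform} quantile process by a sequence of Brownian bridges, the weighted law-of-the-iterated-logarithm bound $\sup_{\delta_n\le s\le 1-\delta_n}|u_n(s)|^2/(s(1-s))=O(\log\log n)$ a.s., and the almost-sure two-sided control of $U_n(s)/s$ down to $s=\delta_n$ (which is where the constant $25$ is actually pinned down) --- are themselves substantial theorems that you invoke rather than establish; so what you have written is a reduction of the cited result to those ingredients, which is indeed how Cs\"org\H o and R\'ev\'esz organize the proof, rather than a self-contained argument.
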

\begin{proof}[\bf Proof of Theorem \ref{mt}.]
	We assume that the random variables $X_1,X_2,\ldots$ are defined on the probability space given in Theorem \ref{qproct}.
	By a simple calculation,
	\begin{small}
		\[X'WX=\begin{bmatrix}
		\smashoperator\sum_{j=\lceil na\rceil}^{\lfloor nb\rfloor}\log^2s_jR(s_j)
		&-\smashoperator\sum_{j=\lceil na\rceil}^{\lfloor nb\rfloor}\log s_jR(s_j) & -2\smashoperator\sum_{j=\lceil na\rceil}^{\lfloor nb\rfloor}\log s_j\cos(2\pi s_j)R(s_j)\ldots\\
	-\smashoperator \sum_{j=\lceil na\rceil}^{\lfloor nb\rfloor}\log s_jR(s_j) & \sum_{j=\lceil na\rceil}^{\lfloor nb\rfloor}R(s_j) & 2\smashoperator\sum_{ j=\lceil na\rceil}^{\lfloor nb\rfloor}\cos(2\pi s_j)R(s_j)\ldots\\
		\vdots & \vdots & \vdots \end{bmatrix}.\]
	\end{small}	
By Riemann sum approximation, we get
	\begin{align}\label{mlimit}
	\lim_{n\to\infty}&n^{-1}X'WX=M(a,b,R)\\&:=
	\begin{bmatrix}
	\int_a^b\log^2u\,R(u)du
	& -\int_a^b\log u\,R(u)du & -2\int_a^b\log u\cos(2\pi u)R(u)du\ldots\\
	-\int_a^b\log u\,R(u)du & \int_a^bR(u)du & 2\int_a^b\cos(2\pi u)R(u)du\ldots\\
	\vdots & \vdots & \vdots \end{bmatrix}.\nonumber
	\end{align}
	Set $\underline\varepsilon:=\big(\varepsilon(s_{\lceil na\rceil}),\ldots,\varepsilon(s_{\lfloor nb\rfloor})\big)'$ and
	\[y^*:=\big(\log Q(1-s_{\lceil na\rceil}),\ldots,\log Q(1-s_{\lfloor nb\rfloor}))\big).\]
	Then we have $y^*=X\beta_{\widetilde p}$, $\beta_{\widetilde p}=(X'WX)^{-1}X'Wy^*$ and hence $\alpha=e_1'\beta_{\widetilde p}=e_1'(X'WX)^{-1}X'Wy^*$.
	It follows that $\underline\varepsilon=y-y^*$ and
	\[\sqrt n(\widehat\alpha_n^{(W)}-\alpha)=\frac1{\sqrt n}e'_1(n^{-1}X'WX)^{-1}X'W\underline\varepsilon=Y_n+A_n,\]
	where $Y_n=n^{-1/2}e'_{1}M(a,b,R)^{-1}X'W\underline\varepsilon$ and
	\[A_n=n^{-1/2}e'_1\big((n^{-1}X'WX)^{-1}-M(a,b,R)^{-1}\big)X'W\underline\varepsilon.\]
	A straightforward calculation yields
	\begin{equation}\label{Yn}
	Y_n=\frac1{\sqrt n}\sum_{j=\lceil na\rceil}^{ \lfloor nb\rfloor}\varepsilon(s_j)G_{R}(s_j).
	\end{equation}
	The main point of the proof is to show that 
	\begin{equation}\label{Yconv}
	\frac1{\sqrt n}\sum_{j=\lceil na\rceil}^{ \lfloor nb\rfloor}\varepsilon(s_j)G_{R}(s_j)\overset{D}{\longrightarrow}N(0,V).
	\end{equation}
	With $\gamma_n(s):=(Q_n(1-s)-Q(1-s))/{Q(1-s)}$, the residual process can be written as $\varepsilon(s)=\log(1+\gamma_n(s))$.
	Set $\eta(x):=\log(1+x)-x$, and let $C$ and $\delta$ be some constants such that $\eta(x)\le Cx^2$, if $|x|\le\delta$.
	Then we obtain $Y_n=Y_{n,1}+A_{n,1}$, where
	\[Y_{n,1}=\frac1{\sqrt n}\sum_{j=\lceil na\rceil}^{ \lfloor nb\rfloor}\gamma_n(s_j)G_{R}(s_j),\quad
	A_{n,1}=\frac1{\sqrt n}\sum_{j=\lceil na\rceil}^{ \lfloor nb\rfloor}\eta(\gamma_n(s_j))G_{R}(s_j).\]
	First we show that $A_{n,1}=o_P(1)$. On the event
	\[
	E_n:=\left\{\max_{\lceil na\rceil \le j\le \lfloor nb\rfloor}|\gamma_n(s_j)|\le\delta\right\},
	\]
	we have 
	\[
	|A_{n,1}|\le C\sqrt n\max_{\lceil na\rceil \le j\le \lfloor nb\rfloor}\gamma^2_n(s_j)
	\,\frac1n\sum_{j=\lceil na\rceil}^{ \lfloor nb\rfloor}|G_{R}(s_j)|.
	\]
	With $\kappa_1:=\sup_{1-b\le s\le1-a}1/|Q(s)|$, we obtain
	\[
	\max_{\lceil na\rceil \le j\le \lfloor nb\rfloor}\gamma^2_n(s_j)
	\le\kappa_1^2\sup_{1-b\le s\le1-a}(Q_n(s)-Q(s))^2.
	\]
	Set $e_n(s):=fQ(s)q_n(s)-B_n(s)$. With the Brownian bridges in Theorem \ref{qproct} and $\kappa_2:=\sup_{1-b\le s\le1-a}1/fQ(s)$ we get
	\begin{align*}
	\sup_{1-b\le s\le1-a}|Q_n(s)-Q(s)|
	&=\frac1{\sqrt n}\sup_{1-b\le s\le1-a}\frac{|e_n(s)+B_n(s)|}{fQ(s)}\cr
	&\le\frac{\kappa_2}{\sqrt n}\sup_{1-b\le s\le1-a}(|e_n(s)|+|B_n(s)|).
	\end{align*}
	It follows that
	\[\sqrt n\max_{\lceil na\rceil \le j\le \lfloor nb\rfloor}\gamma^2_n(s_j)
	\le\frac{\kappa_1^2\kappa_2^2}{\sqrt n}\left(\sup_{1-b\le s\le1-a}|e_n(s)|+\sup_{1-b\le s\le1-a}|B_n(s)|\right)^2.\]
	
	Applying Theorem \ref{qproct}, we obtain $\sqrt n\max_{\lceil na\rceil \le j\le \lfloor nb\rfloor}\gamma^2_n(s_j)=o_P(1)$. 
	This, in combination with $P(E_n)\to0$ and $\frac1n\sum_{j=\lceil na\rceil}^{ \lfloor nb\rfloor}|G_{R}(s_j)|\to\int_a^b|G_R(s)ds$ implies $A_{n,1}=o_P(1)$.
	
	Now we decompose $Y_{n,1}$ as $Y_{n,1}=Y_{n,2}+A_{n,2}$, where
	\begin{align*}Y_{n,2}&=\frac1n\sum_{j=\lceil na\rceil}^{\lfloor nb\rfloor}\frac{B_n(1-s_j)G_R(s_j)}{fQ(1-s_j)Q(1-s_j)},\cr
	A_{n,2}&=\frac1n\sum_{j=\lceil na\rceil}^{\lfloor nb\rfloor}\frac{e_n(1-s_j)}{fQ(1-s_j)Q(1-s_j)}G_R(s_j).
	\end{align*}
	To prove that $A_{n,2}=o_P(1)$, we use the inequality
	\[A_{n,2}\le\kappa_3\sup_{1-b\le s\le1-a}|e_n(s)|\,\frac1n\sum_{j=\lceil na\rceil}^{\lfloor nb\rfloor}|G_R(s_j)|,\]
	where
	\[\kappa_3=\sup_{1-b\le s\le 1-a}1/|fQ(s)Q(s)|.\]
	By Theorem \ref{qproct} we have $A_{n,2}=o_P(1)$. We prove that the limit of $Y_{n,2}$ is $N(0,V)$ given in (\ref{asnormal}). By the distributional equality
	\[Y_{n,2}\overset{D}=\frac1n\sum_{j=\lceil na\rceil}^{\lfloor nb\rfloor}\frac{B(1-s_j)G_R(s_j)}{fQ(1-s_j)Q(1-s_j)},\quad n=1,2,\ldots,\]
	where $B(\cdot)$ is a Brownian bridge process, we obtain
	\[Y_{n,2}\overset{D}{\longrightarrow}\int_a^b\frac{B(1-s)G_R(s)}{fQ(1-s)Q(1-s)}ds.\]
	The variance of the limit random variable is described in (\ref{var}).
	
	The last step is to prove that $A_n=o_P(1)$. Let $(v^*_n, v_{0,n},\ldots, v_{\widetilde p,n})$ be the first row of $(n^{-1}X'WX)^{-1}-M(a,b,R)^{-1}$.
	Using statement (\ref{mlimit}), we have $(v^*_n, v_{0,n},\ldots, v_{\widetilde p,n})\to\,\mathbf0$.
	Set
	\[G^{(n)}(u):=R(u)\big(-v^*_n\log u+v_{0,n}+2\sum_{k=1}^{\widetilde p}v_{k,n}\cos(2\pi ku)\big),\quad u\in(0,1).\]
	Similarly as in (\ref{Yn}),
	\begin{align*}
	A_n&=\frac1{\sqrt n}\sum_{j=\lceil na\rceil}^{ \lfloor nb\rfloor}\varepsilon(s_j)G^{(n)}(s_j)\\
	&=-v^*_n\frac1{\sqrt n}\sum_{j=\lceil na\rceil}^{ \lfloor nb\rfloor}\varepsilon(s_j)R(s_j)\log s_j+v_{0,n}\frac1{\sqrt n}\sum_{j=\lceil na\rceil}^{ \lfloor nb\rfloor}\varepsilon(s_j)R(s_j)\\
	&\ \ +2\sum_{k=1}^{\widetilde p}v_{k,n}\frac1{\sqrt n}\sum_{j=\lceil na\rceil}^{ \lfloor nb\rfloor}\varepsilon(s_j)R(s_j)\cos(2\pi ks_j).
	\end{align*}
	Each term in the last sum tends to zero, e.g., in the first term $v^*_n\to0$ and applying (\ref{Yconv}),
	in which $G_R(s_j)$ is replaced by $R(s_j)\log s_j$, 
	the sequence $\frac1{\sqrt n}\sum_{j=\lceil na\rceil}^{\lfloor nb\rfloor}\varepsilon(s_j)R(s_j)\log s_j$ has a weak limit.
	
\end{proof}
\bigskip\noindent
{\bf Acknowledgement.} This research was supported by the Ministry of Human Capacities, Hungary grant TUDFO/47138-1/2019-ITM.



\end{document}